\documentclass[11pt]{article}
\usepackage[margin=1in]{geometry}
\usepackage{amsmath,amssymb,amsthm,mathtools}
\usepackage{mathrsfs,mathabx}  
\usepackage{microtype}
\usepackage{enumitem,tikz}
\usepackage[colorlinks=true,linkcolor=blue,citecolor=blue,urlcolor=blue]{hyperref}

\newtheorem{theorem}{Theorem}[section]
\newtheorem{lemma}[theorem]{Lemma}

\newtheorem{definition}[theorem]{Definition}
\newtheorem{fact}[theorem]{Fact} 
\newtheorem{conjecture}[theorem]{Conjecture}
\newtheorem{observation}[theorem]{Observation}
\newtheorem{remark}[theorem]{Remark}

\usepackage[comma,sort, numbers]{natbib}

\title{ Almost Empty Monochromatic Triangles With Many Colors }

\author{Bhaswar B. Bhattacharya\thanks{Department of Statistics and Data Science, University of Pennsylvania and National University of Singapore \texttt{bhaswar@wharton.upenn.edu}} \and 
Sandip Das\thanks{Advanced Computing and Microelectronics Unit, Indian Statistical Institute, Kolkata, \texttt{sandip.das.69@gmail.com}}
\and 
Sk Samim Islam\thanks{Advanced Computing and Microelectronics Unit, Indian Statistical Institute, Kolkata, \texttt{samimislam08@gmail.com}}
\and 
Aashirwad Mohapatra\thanks{Advanced Computing and Microelectronics Unit, Indian Statistical Institute, Kolkata, \texttt{aashirwad\_r@isical.ac.in}} 
\and 
Saumya Sen\thanks{Advanced Computing and Microelectronics Unit, Indian Statistical Institute, Kolkata, \texttt{saumyasen72@gmail.com}}
}

\date{}

\begin{document}
\maketitle

\begin{abstract} 
Given integers $c\geq 2$ and $s\geq 0$, let $\mathsf{M}_3(c,s)$ denote the least
integer such that every set of at least $\mathsf{M}_3(c,s)$ points in the
plane, no three on a line, colored with $c$ colors, contains a monochromatic
triangle with at most $s$ interior points.  Further, let $\lambda_3(c)$ be the
least integer such that $\mathsf{M}_3(c,\lambda_3(c))<\infty$. \citet{colorempty} proved that, for every $c\geq 2$,
$$\left\lfloor\frac{c-1}{2}\right\rfloor \leq \lambda_3(c)\leq c-2.$$
Later, \citet{cravioto2019almost} improved the upper bound to $c-3$, for
$c\geq 4$.  In this paper, we refine their argument
to obtain the following asymptotic improvement: 
$$\lambda_3(c) \leq c-\sqrt{c\log c}+o (\sqrt{c\log c} ),$$ 
for all sufficiently large $c$. We also show that every $c$-coloring of a sufficiently large Horton set contains a monochromatic triangle with at most $\lfloor \frac{c-1}{2} \rfloor$ interior points. This shows that the aforementioned lower bound on $\lambda_3(c)$ is sharp within the class of Horton sets.  We conclude with a conjecture on the large-color asymptotics of $\lambda_3(c)$.    
\end{abstract}

\section{Introduction}

The celebrated Erd\H{o}s--Szekeres theorem \cite{erdos1935combinatorial,erdHos1960some} asserts that, for every integer $r\geq 3$, there exists a smallest integer $\mathsf{ES}(r)$ such that every set of at least $\mathsf{ES}(r)$ points in the plane in general position (that is, with no three points collinear) contains $r$ points in convex position, equivalently, $r$ points that form the vertex set of a convex $r$-gon.  This theorem is a cornerstone of combinatorial geometry and
geometric Ramsey theory, and has inspired many refinements and extensions (see, for example,
\cite{morris2000erdos,barany2001problems,toth2005erdos,holmsen2020two}
and the references therein). One particular strengthening, introduced by Erd\H{o}s~\cite{erdos1978some}, asks for an $r$-\emph{hole}, that is, a convex $r$-gon whose interior contains no other point of the set.  Let $\mathsf{H}(r)$ denote the minimum integer, if it exists, such that every set of $\mathsf{H}(r)$ points in general position contains an $r$-hole.  It is known that
$\mathsf{H}(3)=3$, $\mathsf{H}(4)=5$, and $\mathsf{H}(5)=10$ \cite{harborth1978konvexe}.  \citet{horton1983sets} showed that $\mathsf{H}(r)$ does not exist for $r\ge 7$. The finiteness of $\mathsf{H}(6)$ remained open for many years before being established independently by \citet{gerken2008empty} and \citet{nicolas2007empty}. Recently, \citet{heule2024happy} determined the exact value $\mathsf{H}(6)=30$, thereby resolving the longstanding 6-hole problem.

The non-existence of $\mathsf{H}(r)$ for $r\geq 7$, motivated the study of different relaxations.  One such direction concerns \emph{almost empty} convex polygons, whose interiors contain only a bounded number of points \cite{nyklova2003almost,koshelev2011computer,huemer2022weighted}, which is also closely related to the study of islands \cite{bautista2011computing,balko2022holes}.  Other variants impose arithmetic conditions on the number of interior points \cite{bialostocki1991some}, or introduce vertex colors (see \cite{devillers2003chromatic,fabila2021empty,diaz2021note,arevalo2022rainbow,kano2021discrete,bereg2015balanced} and references therein). In the colored setting, a polygon is \emph{monochromatic} if all of its vertices have the same color.  For two colors, \citet{grima2009some} proved that every set of $9$ points contains a monochromatic $3$-hole. Quantitatively, \citet{aichholzer2009empty} showed that every sufficiently large $2$-colored $n$-point set contains $\Omega(n^{\frac{5}{4}})$ empty monochromatic triangles, improved to $\Omega(n^{\frac{4}{3}})$ by \citet{pach2013monochromatic}.  In contrast, \citet{devillers2003chromatic} constructed arbitrarily large $3$-colored sets with no monochromatic $3$-hole and arbitrarily large $2$-colored sets with no monochromatic $5$-hole. They also conjectured that every sufficiently large $2$-colored point set contains a monochromatic $4$-hole.  Although this conjecture remains open, \citet{aichholzer2010large} established a relaxed version in which the 4-hole is not required to be convex.

The non-existence of empty monochromatic triangles in point sets with three or more colors, prompted \citet{colorempty} to initiate the study of almost empty monochromatic triangles. 
Formally, for integers $c \geq 2$ and $s \geq 0$, define $\mathsf{M}_3(c,s)$ to be the least integer such that any set of at least $\mathsf{M}_3(c,s)$  points in the plane in general position, colored with $c$ colors, contains a monochromatic triangle with at most $s$ interior points. Moreover, denote by $\lambda_3(c)$ the least integer such that $\mathsf{M}_3(c, \lambda_3(c)) < \infty$.  \citet{colorempty} proved that, for any $c \geq 2$, 
\begin{align}\label{eq:almostempty}
\left\lfloor \frac{c-1}{2} \right\rfloor \leq \lambda_3(c) \leq c-2. 
\end{align} 
Specifically, for $c=3$, this implies $\lambda_3(3)=1$, that is, every sufficiently large 3-colored point set contains a monochromatic triangle with at most one interior point (see also the recent paper \cite{counting} for a quantitative superlinear bound). The upper bound in \eqref{eq:almostempty} was later improved by \citet{cravioto2019almost} to $c-3$, for $c \geq 4$. In particular, for $c = 4$, this implies $\lambda_3(4)=1$, which settled a conjecture in \cite{colorempty}. The upper bound in \cite{cravioto2019almost} together with the lower bound in \eqref{eq:almostempty} also show that $\lambda_3(5)=2$.

The above results combined determine the exact values of $\lambda_3(c)$ for small values of $c$, specifically for $c\leq 5$, and provide linear (in terms of $c$) bounds with constant factor corrections, for general $c \geq 6$. A complementary question is to understand the behavior of $\lambda_3(c)$ as the number of colors becomes large. In this paper, we take a first step in this direction by proving the following result, which improves the additive correction of $-3$ in the upper bound of \citet{cravioto2019almost} to order $-\sqrt{c\log c}$, for $c$ sufficiently large. Throughout, all logarithms are natural, unless a base is explicitly indicated.

\begin{theorem}\label{thm:triangle}
For $c$ sufficiently large, 
$$\lambda_3(c)    \leq c- \left\lfloor \sqrt{c \log c-4 c \log\log c }\right\rfloor . $$
\end{theorem}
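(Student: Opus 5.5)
The plan is to refine the averaging-over-a-triangulation argument behind the bound $c-2$ of \citet{colorempty} and the bound $c-3$ of \citet{cravioto2019almost}, with an extra free parameter $t$ to optimize. The basic count is as follows. If a colour class $A$ has $n_A$ points, $h_A$ of them on $\partial\operatorname{conv}(A)$, then any triangulation of $A$ has $2n_A-h_A-2$ triangles with pairwise disjoint interiors. Their interiors cover every point of $\operatorname{conv}(A)$ other than $A$ itself. So if every monochromatic triangle has at least $s+1$ interior points, then
\begin{equation*}
(s+1)(2n_A-h_A-2)\le |P\cap \operatorname{conv}(A)|-n_A .
\end{equation*}
The loss relative to the lower bound comes from two sources: points of other colours lying outside $\operatorname{conv}(A)$, and the hull term $h_A$.

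\textbf{Step 1 (Ramsey reduction).} Using Erdős--Szekeres and Ramsey's theorem, and since $\mathsf{M}_3$ is only claimed finite, I first pass to a huge subset with a rigid structure. Each colour class will be large with small hull relative to its size; this is achieved by restricting to a region deep inside many convex layers, so that $h_A=o(n_A)$. In addition, the convex hulls of the colour classes will be nested, or at least all will contain a common deep region $R$. Inside $R$ I keep the colour densities $n_1\ge n_2\ge\dots\ge n_c$, normalized so that $\sum_i n_i=n$.

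\textbf{Step 2 (sub-configurations of colours).} Choose a subset $S$ of $t$ colours and let $A$ be the largest class in $S$. Inside $\operatorname{conv}(A)\cap R$, only the colours of $S$ together with the remaining $c-t$ colours contribute. Apply the counting inequality both to $A$ and recursively to the triangles of the other classes. A triangle of colour $j$ that lies inside a triangle of colour $i$ is itself one of the triangles being counted, so points are effectively charged to the sparser colours. This refinement is what improved $c-2$ to $c-3$ in \citet{cravioto2019almost}, and here I run it with $t$ growing. It yields a bound of the form $s\le c-1-g(t,n_1,\dots,n_c)$, where $g$ measures how much heavier the densest classes are than average.

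\textbf{Step 3 (two regimes and optimization).} If the densities are spread out, the top class satisfies $n_1\ge (1+\delta)n/c$. Then the averaging of Step 2 applied to $A=$ class $1$ already beats $c-\sqrt{c\log c}$. If the densities are nearly equal, I would exploit many colours at once. Among the $\binom{c}{t}$ choices of $S$, or via a random choice of a deep point and of which nested hulls contain it, the expected surplus is of order $t$. This surplus is paid for by a failure probability of about $e^{-t^2/c}$, which must be compensated by a factor $\mathrm{poly}(c)$. Balancing gives $t^2\approx c\log c - O(c\log\log c)$, and this is where $\lfloor\sqrt{c\log c-4c\log\log c}\rfloor$ comes from.

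The main obstacle I expect is Step 3's near-uniform case. There I must make rigorous that some set of $t$ colours has its hulls arranged so that all $t$ gains of Step 2 add up, rather than cancelling through points lying outside hulls. Making the tail estimate tight enough to produce exactly the $-4c\log\log c$ correction will require a careful Chernoff/convexity bound.
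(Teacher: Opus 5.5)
\textbf{Step 1 fails.} It is not a harmless normalization. First, badness is not inherited by arbitrary subsets. Interior points are counted in the original set, so deleting points can only make triangles emptier. The only safe passages are to $P\cap C$ with $C$ convex, which is why the paper only ever passes to $S^{(r-1)}\cap\mathrm{int}(CH(S^{(r-1)}_r))$. Subsets produced by Ramsey or Erd\H{o}s--Szekeres arguments are not of this form.

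Second, nothing lets you force $h_A=o(n_A)$, let alone make all classes large with nested hulls. A colour class in convex position stays in convex position under every convex restriction, and the hypothesis does not exclude this. Indeed, if $h_A=o(n_A)$ were available for a largest class $A$, your own counting inequality would give $2(s+1)(1-o(1))n_A\le (c-1)n_A$. That is essentially $\lambda_3(c)\le\lfloor\frac{c-1}{2}\rfloor$, which matches the lower bound and would settle Conjecture~\ref{conjecture:interiorpoints}, a far stronger result than the theorem. So the hull term is the whole difficulty, not a lower-order loss, and Step~1 assumes it away.

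\textbf{Steps 2--3 do not supply the missing mechanism.} The function $g$ is never defined, and there is no random experiment behind the ``failure probability $e^{-t^2/c}$''. The density dichotomy is not preserved when you pass inside a hull. The near-uniform case you flag as the obstacle is the entire theorem.

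The paper's key idea is to exploit the hull term rather than suppress it:
\begin{itemize}
\item A triangulation of $S^{(r-1)}_r$ has $|S^{(r-1)}_r|+|S^{(r)}_r|-2$ faces. Here $S^{(r)}_r$ is exactly the set of colour-$r$ points surviving into the next region $S^{(r)}=S^{(r-1)}\cap\mathrm{int}(CH(S^{(r-1)}_r))$.
\item Choosing at each step the largest \emph{unused} colour, blocking gives Lemma~\ref{lm:S}: $s_c(|S^{(r-1)}_r|+|S^{(r)}_r|-2)\le\gamma_{r-1}+(c-r)|S^{(r-1)}_r|$, where $\gamma_{r-1}$ counts surviving points of already used colours.
\item Run this for $R=t_c+\varepsilon_c$ steps, with $\varepsilon_c=\lfloor\sqrt{c/\log c}\rfloor$, and iterate backwards. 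Each $|S^{(r-1)}_r|$ is then controlled by $|S^{(R-1)}_R|$ through the birthday-type products $b_{r,t}=\prod_{a=r}^{t}\frac{c-a}{s_c}\le e^{t_c^2/(2s_c)}\approx\sqrt c/\log^2 c$.
\item This yields $\gamma_{R-1}\lesssim\frac{t_c^2}{s_c}e^{t_c^2/(2s_c)}|S^{(R-1)}_R|\approx\frac{\sqrt c}{\log c}|S^{(R-1)}_R|=o(\varepsilon_c)|S^{(R-1)}_R|$, up to additive terms absorbed by taking $|S|$ large.
\item At step $R$, however, the blocking inequality has slack $s_c-(c-R)=\varepsilon_c+1$. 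Once $|S^{(R-1)}_R|\ge 2s_c$, it forces $\gamma_{R-1}\ge\varepsilon_c|S^{(R-1)}_R|$, a contradiction.
\end{itemize}
The $-4c\log\log c$ term is simply what makes $\frac{t_c^2}{s_c}e^{t_c^2/(2s_c)}=o(\sqrt{c/\log c})$. No Chernoff bound or case split on densities is involved.
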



The proof of Theorem~\ref{thm:triangle} is given in Section~\ref{sec:trianglepf}. It follows the same high-level strategy as \citet{cravioto2019almost}: choose a most frequent color class, triangulate it, use points of the other colors to block the resulting monochromatic triangular faces, and then repeat the argument inside its convex hull. To obtain the improved bound, we introduce two new ingredients into this general strategy: First, at each step of the iteration, instead of choosing
a most frequent color among all colors present, as in \cite{cravioto2019almost}, we choose a most frequent color among the unused colors, which prevents repetitions. Second, we allow the number of recursive steps to grow with the number of colors $c$. This requires controlling the cumulative contribution of all previously selected colors throughout a long iteration. By carefully balancing the number of recursive steps against this accumulated error, we obtain the $\sqrt{c\log c}$ improvement,
whose scale can be understood heuristically through a birthday-paradox interpretation of the
products arising in the recursion (see Remark~\ref{remark:product}).

\begin{remark} 
{\em It is possible to keep track of the constants throughout the proof of Theorem~\ref{thm:triangle} and make the asymptotic argument completely explicit. However, the resulting threshold for $c$ would be far from optimal. Hence, to avoid notational clutter, we have chosen to present the result in asymptotic form. Similarly, the coefficient $4$ in the $4c\log\log c$ term is chosen for convenience rather than optimality and may be reduced by sharpening the product estimates and the control of
the recursive errors. A more refined analysis may also improve the coefficient of the leading $\sqrt{c\log c}$ correction, but we do not pursue such refinements here. The present techniques, however, appear to encounter a natural barrier at the $\sqrt{c\log c}$ scale, and obtaining an additive correction of larger order would likely require new ideas. } 
\end{remark}

Next, we consider the lower bound on $\lambda_3(c)$. \citet{colorempty} established the lower bound in \eqref{eq:almostempty} by constructing arbitrarily large $c$-colored Horton sets (see Definition~\ref{defn:H}) in which every monochromatic triangle contains at least $\left\lfloor\frac{c-1}{2}\right\rfloor$ interior points. The next result shows that this lower bound is tight within the class of Horton sets. 

\begin{theorem}\label{thm:colortriangle}
For every integer $c\geq 2$, there exists an integer $n_c$ such that any $c$-coloring of a Horton set of size at least $n_c$, contains a monochromatic triangle with at most
$\left\lfloor\frac{c-1}{2}\right\rfloor$ interior points. 
\end{theorem}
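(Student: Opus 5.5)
Let $k=\lfloor (c-1)/2\rfloor$. We use the standard recursive structure of a Horton set $H$ of size $2^m$: $H$ splits into $H_0$ (even-indexed points) and $H_1$ (odd-indexed points), each again a Horton set. Moreover, $H_1$ lies "high above" $H_0$, in the sense that every line through two points of $H_0$ passes below all of $H_1$, and every line through two points of $H_1$ passes above all of $H_0$. Iterating, $H$ is a binary tree of sub-Horton sets. The key fact we use is the interior-point count for a triangle with vertices $a,b$ in one half and $x$ in the other. Take $a,b\in H_1$ and $x\in H_0$. Then every point of $H_0$ lies below line $ab$. Hence the interior of the triangle $abx$ meets $H_1$ only in points between $a$ and $b$ in $x$-order lying below segment $ab$. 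It meets $H_0$ in a set that can be controlled by choosing $x$ extremal, for instance by taking $x$ adjacent in $x$-order to the vertical strip spanned by $a,b$. I would first record this as a lemma: if $a,b$ are consecutive points of some sub-Horton set $S$ in a layer, and $x$ is suitably chosen in the sibling subset, then the triangle $abx$ has few interior points, namely only points of the sibling set lying in the strip between $a$ and $b$.

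The plan is then a pigeonhole plus Ramsey-type descent along the tree. Go down $r$ levels, where $r$ depends on $c$, and obtain $2^r$ disjoint sub-Horton sets, each still huge. By a product Ramsey (Gallai-type) argument, pass to a deep sub-Horton set $T$ in which the color pattern is "homogeneous across levels". Concretely, for each node, record the set of colors appearing among points in a fixed position (say, the leftmost few points of each child). Then find a sub-structure where this data is constant. Within such a homogeneous structure, the triangle $abx$ has vertices $a,b$ consecutive points of color $\alpha$ in $H_1$ and $x$ of color $\alpha$ chosen in the $H_0$-strip. Every interior point lies in the strip, and points between $a$ and $b$ of distinct colors other than $\alpha$ must appear. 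The strip between two consecutive $\alpha$-colored points contains only non-$\alpha$ points. If some color $\alpha$ has many points in the strip, we get a monochromatic triangle whose strip contains at most about $c-1$ non-$\alpha$ points per side. The aim is to split these between the "above" and "below" regions so that at most $\lfloor (c-1)/2\rfloor$ fall inside. The natural choice is to use both orientations: triangles with the apex in $H_0$ capture points of the strip below the chord, and those with the apex in $H_1$ capture points above. One of the two captures at most half.

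So the key steps, in order, are: (1) state the separation and strip lemmas for Horton sets; (2) show that between two consecutive points of a color class in a Horton set, one may choose a color $\alpha$ and consecutive $\alpha$-points such that at most $c-1$ other points lie in the relevant gap, by first passing (via pigeonhole on the $2^m$ points and the recursive structure) to a sub-Horton set where every color that appears does so densely; (3) an averaging or choice-of-orientation argument showing that either the "upper" triangle or the "lower" triangle contains at most half of the gap points, giving at most $\lfloor (c-1)/2\rfloor$ interior points.

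The main obstacle is step (2)/(3). We need a configuration where the gap points split cleanly between the two candidate triangles. Also, the apex can be chosen to add no extra interior points, which requires the apex to be, for example, the first point of color $\alpha$ beyond the gap in the sibling subset. I would first try to induct on the depth of the Horton tree, tracking for each sub-Horton set the minimal gap sizes for each color. I would then use the fact that a gap of $g$ distinct non-$\alpha$ points is split into the points of $H_0$ and $H_1$ positions (alternating parity), which are exactly the below and above parts. The parity alternation is what should yield the factor $1/2$.
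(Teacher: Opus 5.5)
\paragraph{Verdict.} Your proposal has a genuine gap. It sits exactly where you put the ``main obstacle'': steps (2)--(3) are never carried out, and as set up they would not work.

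\paragraph{Where the argument fails.} The averaging step (``one of the two orientations captures at most half'') needs a point of color $\alpha$ on \emph{both} sides of the chord $ab$. Each of these must lie in a set whose triangle picks up only gap points. Your Ramsey homogenization is never made precise, and it does not guarantee even one such apex, since a coloring can simply omit $\alpha$ from the sibling set.

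Your parity remark also fails in your own setup. The in-between points split evenly across the line $ab$ only when $a$ and $b$ lie in \emph{different} children of the smallest sub-Horton set containing them. If $a,b$ lie in the same child of a set $A$ (e.g.\ consecutive $\alpha$-points of $H_1$), then every in-between point of the other child lies on one side of $ab$. The in-between points of their own child need not split evenly either.

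Separately, your strip lemma misplaces the interior points. For $a,b\in H_1$ and $x\in H_0$, no point $w\in H_0$ lies inside $\triangle abx$, since otherwise the line $xw$ would separate $a$ from $b$. The interior points are the points of $H_1$ between $a$ and $b$ lying below $ab$, and $x$ need not be chosen extremal.

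\paragraph{The first missing idea: a $2$-adic descent producing an odd gap.} Among $2c+1$ consecutive points of $H$ some color occurs three times. So there are same-colored $h_u,h_v$ with $v-u=2^st$, where $t$ is odd, $1\le s\le\lfloor\log_2(2c)\rfloor$ and $t\le c$.

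At depth $s$ these two points lie in a common sub-Horton set $A$ at odd distance $t$, hence in different children of $A$. By the above/below property, exactly $(t-1)/2$ of the $t-1$ in-between points of $A$ lie on each side of $h_uh_v$. Thus a \emph{single} same-colored apex $z$ in the sibling $A'$ of $A$ gives at most $(t-1)/2\le\lfloor(c-1)/2\rfloor$ interior points. No averaging over orientations is needed.

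\paragraph{The second missing idea: a dichotomy plus induction on $c$.} This replaces your homogenization step. If $A'$ contains no point of that color, then $A'$ has three properties:
\begin{itemize}
\item it is a Horton set colored with at most $c-1$ colors;
\item it has size at least $\lfloor |H|/2^{\lfloor\log_2(2c)\rfloor}\rfloor$, which is still large because the depth is bounded in terms of $c$;
\item it is an island of $H$.
\end{itemize}
So the induction hypothesis yields a monochromatic triangle with at most $\lfloor(c-2)/2\rfloor$ interior points, and this count does not change when passing back to $H$.
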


The proof of Theorem~\ref{thm:colortriangle} is given in Section~\ref{sec:colortrianglepf}. The proof proceeds by induction on the number of colors and exploits the binary structure of the Horton set to descend to an appropriate level of its recursive decomposition, where a parity argument yields the desired bound on the number of interior points. Since Horton sets form one of the principal sources of extremal constructions in these problems, Theorem~\ref{thm:colortriangle} suggests that the true value of $\lambda_3(c)$ may lie closer to the lower bound. Motivated by this, we make the following conjecture:

\begin{conjecture}\label{conjecture:interiorpoints} 
$\lim_{c \rightarrow \infty} \frac{\lambda_3(c)}{c} = \frac{1}{2}$.
\end{conjecture}

Note that \eqref{eq:almostempty} implies that $\lim_{c\to\infty}\frac{\lambda_3(c)}{c}\in [\frac{1}{2},1]$, if the limit exists. Theorem~\ref{thm:triangle} improves the second-order term in the upper bound, but does not change its leading constant.  Thus, any improvement in the asymptotic leading constant would already be a significant strengthening of the current bounds.

\section{Preliminaries}

Let $S$ be a set of points in the plane in general position, that is, no three points of the set $S$ are on a line. We denote the convex hull of $S$ by $CH(S)$ and the interior of $CH(S)$ by $\mathrm{int}(CH(S))$. The set of boundary vertices of $CH(S)$ is denoted by $V(CH(S))$. For an integer $c \geq 1$, a $c$-coloring of $S$ is a function $\phi: S \rightarrow [c]:= \{1, 2, \ldots, c\}$, where $\phi(s) = a$ means that the point $s \in S$ is assigned color $a \in [c]$. We will write $S_a$ to denote the subset of points of color $a \in [c]$, that is, 
\begin{align*} 
S_a := \{s \in S: \phi(s) = a\} .  
\end{align*}
Clearly, $S_1, S_2, \ldots, S_c$ are disjoint sets and $\sum_{a=1}^c |S_a| = |S|$.  

The following standard fact about triangulation of point sets will be an important ingredient in the proofs.

\begin{fact}\label{ft:triangle}
Let $S$ be a set of points in the plane in general position. Then every triangulation of $S$
has $2|S|-|V(CH(S))|-2$ triangles.  
\end{fact}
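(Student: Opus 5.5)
We count the triangles by summing their interior angles in two ways; this avoids Euler's formula and uses only the defining properties of a triangulation. Let $n=|S|$, $h=|V(CH(S))|$, and let $T$ be a triangulation of $S$ with $t$ triangles. By definition, $T$ is a subdivision of $CH(S)$ into triangles with vertex set exactly $S$ whose interiors are pairwise disjoint. By general position, no point of $S$ lies in the relative interior of an edge of $T$, and no point of $S$ other than the vertices of $CH(S)$ lies on its boundary.

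The first count is direct. Each triangle has angle sum $\pi$, so the sum of all angles of all triangles of $T$ equals $t\pi$.

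The second count groups these angles by the vertex at which they sit. Every angle of a triangle is located at some point of $S$, so we sum over the points of $S$ the angles of the triangles incident to that point.
\begin{itemize}
\item If $p\in S$ lies in $\mathrm{int}(CH(S))$, then the triangles incident to $p$ cover a full neighborhood of $p$ without overlap, so their angles at $p$ sum to $2\pi$. This step uses that $p$ is not on an edge interior and that $T$ covers $CH(S)$.
\item If $p\in V(CH(S))$, then the triangles at $p$ exactly fill the interior angle of the convex polygon $CH(S)$ at $p$.
\end{itemize}
The interior angles of a convex $h$-gon sum to $(h-2)\pi$. Hence the total angle sum also equals $2\pi(n-h)+(h-2)\pi$.

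Equating the two counts gives $t\pi=2\pi(n-h)+(h-2)\pi$, so $t=2n-h-2$, as claimed. The only step needing care is the local covering claim at interior points: the triangles around an interior vertex fill its neighborhood with no gaps or overlaps. This follows immediately from $T$ being a subdivision of $CH(S)$ with vertex set $S$.

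As a sanity check, Euler's formula gives the same answer. Count edges by the relation $3t+h=2E$, since each interior edge borders two triangles and each hull edge borders one. Combining this with $n-E+(t+1)=2$ yields the same value of $t$.
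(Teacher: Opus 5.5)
Your proof is correct. The paper gives no proof of this statement to compare against: it quotes it as a standard fact about triangulations.

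Your angle-sum double count is a complete, self-contained proof. The Euler-formula computation you give as a check is the more common textbook route: $3t+h=2E$ together with $n-E+(t+1)=2$. The two routes rely on different inputs:
\begin{itemize}
\item The angle-sum version needs only the local covering property of a triangulation. You justify this correctly: a triangle not incident to an interior point $p$ is a closed set missing $p$, so a small disk around $p$ meets only triangles with vertex $p$.
\item The Euler route instead needs the planar-graph structure and the edge--face incidence count.
\end{itemize}

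One small point is worth making explicit. The formula presupposes $|S|\ge 3$, so that, by general position, $CH(S)$ is a genuine convex polygon with $h\ge 3$ and the $(h-2)\pi$ angle sum applies. For $|S|=1$ the formula would give $-1$. This is exactly the regime in which the paper uses the fact, since it assumes $|S_r^{(r-1)}|\ge 3$ in the proof of Lemma~\ref{lm:S}.
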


Throughout the paper we will use the following asymptotic notation: For two nonnegative sequences $a_n$ and $b_n$ we will write $a_n = O(b_n)$, if for all $n$ large enough, $a_n \leq C _1 b_n$, for some constant $C_1 > 0$. Also, $a_n = o(b_n)$ will mean $a_n/b_n \rightarrow 0$ and $a_n \sim b_n$ will mean $a_n/ b_n \rightarrow 1$, as $n \rightarrow \infty$.

\section{Proof of Theorem \ref{thm:triangle}}
\label{sec:trianglepf}

For each large $c$, define 
\begin{align}\label{eq:interiorpoints}
 t_c := \left\lfloor \sqrt{c \log c-4 c \log\log c }\right\rfloor.
\end{align}
We will prove Theorem \ref{thm:triangle} by contradiction. For this, if possible, suppose $\mathsf{M}_3(c, c -  t_c ) =\infty$.  Then for every $N \geq 1$, there exists a $c$-colored point set $S$ in general position, with $|S| \geq N$, such that every monochromatic triangle in $S$ has at least 
\begin{align}\label{eq:sc}
s_c := c -  t_c +1 
\end{align}
interior points. We will refer to a set with this property as a {\it bad} set. 
%
%
Now, define 
\begin{align} 
R=  t_c + \varepsilon_c , \quad \text{ where } \varepsilon_c  = \left \lfloor \sqrt{\frac{c}{\log c}} \right\rfloor. 
\label{eq:Rlayers}
\end{align}
Clearly, $R<c$ for all sufficiently large $c$. Then, given a bad set $S$, construct nested sets 
\begin{align}\label{eq:sequence}
S^{(0)} \supseteq S^{(1)} \supseteq \cdots \supseteq S^{(R)}
\end{align} 
and relabel the colors as they are selected as follows: To begin with, let $S^{(0)}=S$. For $r \geq 1$, having selected $S^{(0)}, \ldots, S^{(r-1)}$, choose a color $r \in [c]$ that is not chosen earlier and whose class in $S^{(r-1)}$ has maximum cardinality among the unused colors. Then, define (see Figure \ref{fig:setcolors} for an illustration): 
$$S^{(r)}=S^{(r-1)}\cap \mathrm{int}(CH(S^{(r-1)}_{r})).$$  
Recall that $S^{(r-1)}_{r}$ is the subset of $S^{(r-1)}$ of color $r$. Hence, $S^{(r)}$ is the set of points in $S^{(r-1)}$ that lie in the interior of the convex hull of $S^{(r-1)}_{r}$, for $ r \geq 1$. 
Note that $|S^{(r-1)}_{r}\cap \mathrm{int}(CH(S^{(r-1)}_{r}))| =  |S_r^{(r)}|$ and, hence, 
\begin{align}\label{eq:Sr}
 |S^{(r-1)}_{r}| = |V(CH(S_r^{(r-1)})| + |S^{(r-1)}_{r}\cap \mathrm{int}(CH(S^{(r-1)}_{r}))| = |V(CH(S_r^{(r-1)})| + |S_r^{(r)}| . 
\end{align}

\begin{figure}[ht]
\centering
\begin{tikzpicture}[scale=1.05,
    point/.style={circle,inner sep=2.2pt},
    hullone/.style={red!70!black,thick},
    hulltwo/.style={blue!70!black,thick}
]


\fill[red!20,opacity=0.35]
    (0.8,1.0) --
    (1.4,4.8) --
    (4.5,5.6) --
    (7.7,4.7) --
    (8.3,1.2) --
    (4.6,0.5) --
    cycle;


\fill[blue!20,opacity=0.45]
    (2.2,2.0) --
    (3.0,4.0) --
    (5.8,4.2) --
    (6.8,2.2) --
    (4.4,1.4) --
    cycle;


\draw[hullone]
    (0.8,1.0) --
    (1.4,4.8) --
    (4.5,5.6) --
    (7.7,4.7) --
    (8.3,1.2) --
    (4.6,0.5) --
    cycle;

\draw[hulltwo]
    (2.2,2.0) --
    (3.0,4.0) --
    (5.8,4.2) --
    (6.8,2.2) --
    (4.4,1.4) --
    cycle;


\foreach \x/\y in {
    0.8/1.0,
    1.4/4.8,
    4.5/5.6,
    7.7/4.7,
    8.3/1.2,
    4.6/0.5,
    3.0/2.1
}
    \fill[red] (\x,\y) circle (2.5pt);

\foreach \x/\y in {
    2.2/2.0,
    3.0/4.0,
    5.8/4.2,
    6.8/2.2,
    4.4/1.4,
    4.5/3.0
}
    \fill[blue] (\x,\y) circle (2.5pt);

\foreach \x/\y in {
    0.5/3.4,
    8.5/3.2,
    2.0/3.5,
    2.7/3.0,
    4.0/2.4,
    5.1/2.4,
    4.7/3.4
}
    \fill[green!60!black] (\x,\y) circle (2.5pt);

\foreach \x/\y in {
    1.0/5.6,
    8.0/5.5,
    1.1/0.3,
    8.7/0.4,
    7.0/3.5,
    3.7/3.5,
    5.2/3.1,
    4.4/2.7
}
    \fill[orange] (\x,\y) circle (2.5pt);


\node[red!70!black,above left] at (7.15,5.15)
    {$CH(S_1^{(0)})$};

\node[blue!70!black] at (6.85,4.2)
    {$CH(S_2^{(1)})$};

\end{tikzpicture}
\caption{\small{Illustration of the nested sets
$S^{(0)}\supseteq S^{(1)}\supseteq S^{(2)}$.
The shaded red region is $CH(S_1^{(0)})$, whose interior determines
$S^{(1)}$. The blue region is $CH(S_2^{(1)})$, whose interior determines
$S^{(2)}$.}}
\label{fig:setcolors}
\end{figure}

The first main ingredient in the proof is the following lemma. This is a local blocking recursion, which shows that if the chosen color class generates many monochromatic triangular faces, there must be enough differently colored points to block all of them. To this end, for $r \geq 1$, define $\gamma_r = \sum_{a=1}^{r} |S^{(r)}_a|$ and $\gamma_0=0$.

\begin{lemma}\label{lm:S}
Define $s_c = c-  t_c+1$.  For every $1\leq r \leq R$,
\begin{align}\label{eq:S}
        s_c (|S^{(r-1)}_{r}|+|S_r^{(r)}|-2)&\leq \gamma_{r-1}+(c-r)|S^{(r-1)}_{r}| . 
\end{align} 
\end{lemma}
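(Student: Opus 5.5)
Fix $1\le r\le R$ and triangulate the color class $X:=S^{(r-1)}_r$. By Fact~\ref{ft:triangle}, the number of triangles is
$$T = 2|X|-|V(CH(X))|-2.$$
By \eqref{eq:Sr}, $|V(CH(X))| = |X|-|S_r^{(r)}|$, so
$$T = |X|+|S^{(r)}_r|-2,$$
which is exactly the bracketed factor on the left of \eqref{eq:S}. Each of these triangles is monochromatic of color $r$ with vertices in $S$. Since $S$ is bad, each contains at least $s_c$ points of $S$ in its interior. The triangles of a triangulation have pairwise disjoint interiors, so the left-hand side $s_c\,T$ is a lower bound for the number of points of $S$ lying in $\mathrm{int}(CH(X))$ but not among the vertices $X$. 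The vertices of a triangulation are never interior to its faces.

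\textbf{Step 1: which points of $S$ can be inside $CH(X)$.} The sets are nested, $S^{(r-1)}\subseteq \mathrm{int}(CH(S^{(r-2)}_{r-1}))$, and since $X\subseteq S^{(r-1)}$ we get $\mathrm{int}(CH(X))\subseteq \mathrm{int}(CH(S^{(r-2)}_{r-1}))$. Iterating, any point of $S$ in $\mathrm{int}(CH(X))$ lies in the interior of all the hulls $CH(S^{(j-1)}_j)$ for $j<r$, and hence belongs to $S^{(r-1)}$. Moreover, such a point is interior to $CH(X)$, so it belongs to $S^{(r)}$.

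\textbf{Step 2: classifying those points by color.} A point interior to some face has one of the following colors.
\begin{itemize}[nosep]
\item Color $r$: this is impossible, since points of $X$ are vertices of the triangulation, not interior points of its faces.
\item A used color $a<r$: there are $|S^{(r)}_a|$ of these, and their sum over $a<r$ is $\sum_{a<r}|S^{(r)}_a|\le \gamma_{r-1}$.
\item An unused color $a>r$: there are at most $|S^{(r-1)}_a|\le |X|$ of these, by the greedy choice of $r$ as the most frequent unused color in $S^{(r-1)}$. There are $c-r$ such colors, giving at most $(c-r)|X|$.
\end{itemize}
Summing the two nonzero contributions gives \eqref{eq:S}.

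\textbf{Main obstacle: the bound $|S^{(r)}_a|\le \gamma_{r-1}$ contribution for used colors.} We need $\sum_{a<r}|S^{(r)}_a| \le \gamma_{r-1}=\sum_{a\le r-1}|S^{(r-1)}_a|$. This follows termwise from $S^{(r)}\subseteq S^{(r-1)}$, which gives $|S^{(r)}_a|\le |S^{(r-1)}_a|$ for each $a\le r-1$. The care needed is only to get the containment and disjointness arguments right. In particular, I must make sure that points of used colors lying in a face are counted in $S^{(r-1)}$ rather than in the full set $S$; Step 1 guarantees this.
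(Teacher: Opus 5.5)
Your proof is correct and follows essentially the same route as the paper. You triangulate $S_r^{(r-1)}$, count $|S_r^{(r-1)}|+|S_r^{(r)}|-2$ faces via Fact~\ref{ft:triangle} and \eqref{eq:Sr}, use nestedness to place every blocking point in $S^{(r-1)}$, and bound used colors by $\gamma_{r-1}$ and the $c-r$ unused colors by $|S_r^{(r-1)}|$ each via the greedy choice. The one omission is the degenerate case $|S_r^{(r-1)}|\le 2$, where Fact~\ref{ft:triangle} does not apply; there $S_r^{(r)}=\emptyset$, so the left side is nonpositive and the inequality is trivial, as the paper notes.
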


The second key ingredient is the following lemma, which controls the cumulative contribution of previously selected colors, when the recursive triangulation is iterated for $R$ steps (recall \eqref{eq:Rlayers}).

\begin{lemma}\label{lm:SRsc} 
For every sufficiently large $c$, there exists $N_c\geq 1$ and a bad set $S$, with $|S|\geq N_c$, such that the following hold:  
\begin{align}\label{eq:SRsc}
\gamma_{R-1}=o(\varepsilon_c) |S_{R}^{(R-1)}|  \quad \text{ and } \quad 2s_c\leq |S_R^{(R-1)}| , 
\end{align}
where $s_c$ and $\varepsilon_c$ are defined in \eqref{eq:sc} and \eqref{eq:Rlayers}, respectively.   
\end{lemma}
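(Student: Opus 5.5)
The proof has two parts. First, the nested construction \eqref{eq:sequence} does not collapse: for bad sets of unbounded size, every $S^{(r)}$ with $r\le R-1$, and the class $S_R^{(R-1)}$, have unbounded size. Second, on such large sets, Lemma~\ref{lm:S} is turned into a recursion showing that the previously used colors occupy only a vanishing fraction of $S^{(r)}$.

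\textbf{Subsets cut out by convex regions remain bad.} Each $S^{(r)}$ has the form $S\cap K_r$ with $K_r$ convex: $K_r$ is the intersection of the open sets $\mathrm{int}(CH(S_a^{(a-1)}))$ for $a\le r$. A monochromatic triangle with vertices in $S^{(r)}$ lies in $K_r$. Hence its interior points in $S$ are exactly its interior points in $S^{(r)}$, so every $S^{(r)}$ is again bad.

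\textbf{No collapse.} Fix $r$ and suppose that along a sequence of bad sets with $|S^{(r-1)}|\to\infty$ the set $S^{(r)}$ stayed bounded, say $|S^{(r)}|\le B$. I would fan-triangulate $S_r^{(r-1)}$ from a hull vertex.
\begin{itemize}
\item The resulting $\Omega(|S_r^{(r-1)}|)$ faces are interior-disjoint monochromatic triangles.
\item By badness, each face contains at least $s_c$ points, all lying in $\mathrm{int}(CH(S_r^{(r-1)}))\cap S^{(r-1)}=S^{(r)}$.
\item Disjointness then forces $|S_r^{(r-1)}|=O(B/s_c+1)$, which is bounded.
\end{itemize}
By the maximality in the choice of color $r$, every unused color is bounded in $S^{(r-1)}$, and by induction on $r$ the used colors are bounded too. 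This contradicts $|S^{(r-1)}|\to\infty$. Induction on $r$ therefore shows that $|S^{(r)}|\to\infty$ as $|S|\to\infty$ for every $r\le R-1$, and similarly $|S_R^{(R-1)}|\to\infty$. Choosing $N_c$ large then gives $2s_c\le |S_R^{(R-1)}|$.

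\textbf{Recursion for $\gamma$.} Rearranging Lemma~\ref{lm:S} gives
\[
|S_r^{(r)}|\le \rho_r|S_r^{(r-1)}|+\frac{\gamma_{r-1}}{s_c}+2,
\qquad \rho_r:=\frac{c-r-s_c}{s_c}=\frac{t_c-r-1}{s_c}=O\Bigl(\sqrt{\tfrac{\log c}{c}}\Bigr).
\]
Together with $\gamma_r\le \gamma_{r-1}+|S_r^{(r)}|$, this yields a linear recursion for $\gamma_r$ driven by the sizes of the chosen classes. Maximality among unused colors gives a lower bound for the next class:
\[
|S_{r+1}^{(r)}|\ge \frac{|S^{(r)}|-\gamma_r}{c-r}.
\]
The goal is to prove by induction that $\gamma_r\le \eta_r|S_{r+1}^{(r)}|$, where the $\eta_r$ satisfy a recursion of the form $\eta_r\lesssim(\eta_{r-1}+\rho_r)\cdot(\text{ratio of consecutive class sizes})$. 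Since $\rho_r$ is small, unrolling this over $R=t_c+\varepsilon_c$ steps should give $\eta_{R-1}=o(\varepsilon_c)$. The products that arise are of birthday-paradox type, $\prod(1-r/c)$, and the choice $t_c\approx\sqrt{c\log c-4c\log\log c}$ is what keeps them under control. The additive constants $+2$ are negligible once the sizes are large, by the no-collapse step.

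\textbf{Main obstacle.} The hard step is comparing $|S_r^{(r-1)}|$ with $|S^{(r)}|$. The hull of color $r$ may discard most of $S^{(r-1)}$, so $\gamma_{r-1}$, which is measured against the old, larger set, could be large relative to the new class. To handle this I would use the freedom in choosing $S$. If at some level the ratio $|S^{(r)}|/|S_r^{(r-1)}|$ is too small, I would replace $S$ by the bad set $S^{(r)}$ or by a suitable convex sub-piece; this is again bad by the first step and still large by the no-collapse step. Among all large bad sets I would pick one that is extremal for a potential such as $\sum_r\log\bigl(|S^{(r)}|/|S_r^{(r-1)}|\bigr)$, so that no such restriction improves it. I expect this density-increment bookkeeping to be the most delicate part of the argument.
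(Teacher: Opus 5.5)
\textbf{There is a genuine gap: the estimate that is the content of the lemma is never proved.}

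Your $\gamma$-recursion points in the right direction; it is essentially a forward version of the paper's backward iteration. However, you leave its key step as the ``main obstacle'' and fall back on a density-increment/extremal-potential scheme. That scheme is unnecessary, and as written it is not an argument: restricting to $S^{(r)}$ restarts the colour selection, and nothing shows that an extremal choice yields any inequality.

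The missing observation is one you already use in your no-collapse paragraph. The $s_c(|S_r^{(r-1)}|+|S_r^{(r)}|-2)$ blocking points of a triangulation of $S_r^{(r-1)}$ lie in $\mathrm{int}(CH(S_r^{(r-1)}))$, hence in $S^{(r)}$, and at most $\gamma_{r-1}$ of them carry used colours. Since colour $r+1$ is the largest of the $c-r$ unused classes in $S^{(r)}$,
\[
|S_{r+1}^{(r)}|\ \ge\ \frac{s_c\bigl(|S_r^{(r-1)}|-2\bigr)-\gamma_{r-1}}{c-r},
\qquad\mbox{equivalently}\qquad
|S_r^{(r-1)}|\ \le\ \frac{c-r}{s_c}\,|S_{r+1}^{(r)}|+\frac{\gamma_{r-1}}{s_c}+2 .
\]
This is exactly the ``ratio of consecutive class sizes'' you need. $\gamma_{r-1}$ only has to be compared with the previously chosen class $|S_r^{(r-1)}|$, so it is irrelevant how much of $S^{(r-1)}$ the hull discards.

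You also never carry out the computation behind ``should give $\eta_{R-1}=o(\varepsilon_c)$'', and this is precisely where the choice of $t_c$ enters. Unrolling yields $\eta_{R-1}\lesssim\frac{1}{s_c}\sum_{r\le t_c-2}(t_c-1-r)\,b_{r,R-1}$ with $b_{r,t}=\prod_{a=r}^{t}\frac{c-a}{s_c}$. Bounding this requires $b_{r,R-1}\le\exp\bigl(\frac{t_c^2}{2s_c}\bigr)$ together with $\frac{t_c^2}{2s_c}\le\frac12\log c-2\log\log c+o(1)$. These give the bound $\frac{t_c^2}{s_c}e^{t_c^2/(2s_c)}=O(\sqrt c/\log c)=o(\varepsilon_c)$. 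The $\gamma_{r-1}/s_c$ and additive $+2$ corrections are lower order and can be absorbed, as the paper does.

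Your no-collapse step also has a hole. From $|S^{(r)}|\le B$ you correctly bound every unused class in $S^{(r-1)}$. But ``by induction on $r$ the used colors are bounded too'' does not follow from the hypothesis $|S^{(r-1)}|\to\infty$. It can be repaired by applying the blocking count to each used class inside the bad set $S^{(r-1)}$, which gives $\gamma_{r-1}\le\frac{r-1}{s_c+1}|S^{(r-1)}|+2(r-1)$, and then using $R<s_c$. This crude bound only gives $\gamma_{R-1}=O(R)\,|S_R^{(R-1)}|$, far from $o(\varepsilon_c)$, so it cannot replace the refined recursion. In the paper, the largeness of $|S_R^{(R-1)}|$ instead falls out of the displayed inequality. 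Iterating it backwards and combining with the bound on $\sum_a|S_a^{(a)}|$ gives $|S_1^{(0)}|\le K|S_R^{(R-1)}|+K'$, with $K,K'$ depending only on $c$, while $|S_1^{(0)}|\ge |S|/c$.
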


The proofs of Lemma \ref{lm:S} and Lemma \ref{lm:SRsc} are given in Section \ref{sec:Spf} and Section \ref{sec:SRscpf}. We now apply these lemmas to complete the proof of Theorem \ref{thm:triangle}. To begin with, applying \eqref{eq:S} with $r=R$, gives  
$$s_c\bigl(|S_R^{(R-1)}|+|S_R^{(R)}|-2\bigr) \leq \gamma_{R-1}+(c-R)|S_R^{(R-1)}|.$$ 
Thus, $(s_c-c+R)|S_R^{(R-1)}| +s_c|S_R^{(R)}|-2s_c \leq \gamma_{R-1}$. Since $s_c-c+R =\varepsilon_c+1$, this implies 
\begin{equation*} 
(\varepsilon_c+1)|S_R^{(R-1)}|-2s_c \leq \gamma_{R-1}. 
\end{equation*} 
Then, applying the second inequality in \eqref{eq:SRsc} gives $\varepsilon_c |S_R^{(R-1)}| \leq \gamma_{R-1}$, which is incompatible with the first  inequality in \eqref{eq:SRsc}. This leads to a contradiction to the hypothesis that $M_3(c, c -  t_c ) =\infty$.

\subsection{Proof of Lemma \ref{lm:S}} 
\label{sec:Spf}

Fix $1\leq r \leq R$. The bound in \eqref{eq:S} holds trivially, when $|S^{(r-1)}_{r}| \leq 2$. Hence, we can assume $|S^{(r-1)}_r| \geq 3$. Note that, by Fact \ref{ft:triangle} and \eqref{eq:Sr}, any triangulation of $S^{(r-1)}_{r}$ has 
$$2 |S^{(r-1)}_{r}| - |V(CH(S_r^{(r-1)})|-2 = |S^{(r-1)}_{r}|+|S_r^{(r)}|-2$$ 
triangular faces. Note every monochromatic triangle in $S^{(r-1)}$ has at least $s_c$ points in their interiors. Also, since the regions are convex and nested, every point of $S$ lying in a triangle whose vertices belong to $S^{(r-1)}$ also belongs to $S^{(r-1)}$. Moreover, the interior of a triangular face of a triangulation of $S_r^{(r-1)}$ contains no point of $S_r^{(r-1)}$. Thus, all of its interior points have colors different from $r$. Hence, 
\begin{align}\label{eq:Sra}
\sum_{a \in [c] \backslash\{r\}} |S_a^{(r-1)}| \geq s_c(|S^{(r-1)}_{r}|+|S_r^{(r)}|-2)  .  
\end{align}
Now, note that 
$$\sum_{a \in [c] \backslash\{r\}} |S_a^{(r-1)}| = \sum_{a =1}^{r-1} |S_a^{(r-1)}| + \sum_{a \in [c] \backslash [r] } |S_a^{(r-1)}| \leq \gamma_{r-1} + (c-r) |S_r^{(r-1)}|,$$
by the definition of $\gamma_{r-1}$ and using $|S_a^{(r-1)}| \leq |S_r^{(r-1)}|$, for $a \in [c] \backslash [r]$. Combining the above inequalities gives the result  in \eqref{eq:S}.  \hfill $\Box$  

\subsection{Proof of Lemma \ref{lm:SRsc}} 
\label{sec:SRscpf}

For $1\leq r \leq t<R$ define 
\begin{align}\label{eq:brt}
b_{r,t}=\prod_{a=r}^{t}\frac{c-a}{s_c}  ,  
\end{align}
and the empty product $b_{r, r-1}=1$. We first obtain a uniform bound $b_{r,t}$ (recall \eqref{eq:brt}).

\begin{observation}\label{observation:brt} 
For all sufficiently large $c$, $$ b_{r,t} = O\left(\frac{\sqrt{c}}{\log^2 c}\right) = o(\varepsilon_c), $$ uniformly over $1\leq r\leq t<R$. 
\end{observation}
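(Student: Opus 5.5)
We need to bound $b_{r,t}=\prod_{a=r}^t \frac{c-a}{s_c}$ uniformly over $1\le r\le t<R$. The plan is to use monotonicity to reduce to a single extremal product, then estimate its logarithm by Taylor expansion.

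\emph{Reduction.} Each factor $\frac{c-a}{s_c}$ decreases in $a$. It exceeds $1$ exactly when $a< c-s_c=t_c-1$, and is at most $1$ for $a\ge t_c-1$. Hence, for fixed $t$, the product is largest when it contains all factors with $a\le t_c-2$ and none beyond. So
\[
b_{r,t}\le b_{1,t_c-2}=\prod_{a=1}^{t_c-2}\frac{c-a}{s_c}.
\]
Since $t<R$ and $R\ge t_c-2$, this extremal product is admissible. It therefore suffices to bound
\[
\log b_{1,t_c-2}=\sum_{a=1}^{t_c-2}\Bigl[\log\Bigl(1-\frac{a}{c}\Bigr)-\log\Bigl(1-\frac{t_c-1}{c}\Bigr)\Bigr],
\]
where we wrote $s_c=c-(t_c-1)$.

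\emph{Estimate.} Use $\log(1-x)=-x+O(x^2)$, valid since $a,t_c\le\sqrt{c\log c}=o(c)$. The sum becomes
\[
\sum_{a=1}^{t_c-2}\frac{t_c-1-a}{c}+O\Bigl(\frac{t_c^3}{c^2}\Bigr)=\frac{(t_c-1)(t_c-2)}{2c}+o(1).
\]
The error is $o(1)$ because $t_c^3/c^2=O\bigl((\log c)^{3/2}/\sqrt c\bigr)\to0$. Next, $t_c^2\le c\log c-4c\log\log c$ and $t_c-1\ge 0$ (indeed large), so $(t_c-1)(t_c-2)\le t_c^2$. This gives
\[
\log b_{r,t}\le \tfrac12\log c-2\log\log c+o(1),
\]
that is, $b_{r,t}=O\bigl(\sqrt c/\log^2 c\bigr)$. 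This is the birthday-paradox scale mentioned in Remark~\ref{remark:product}.

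\emph{Comparison with $\varepsilon_c$.} We have $\varepsilon_c=\lfloor\sqrt{c/\log c}\rfloor\sim\sqrt c/\sqrt{\log c}$. Therefore
\[
\frac{b_{r,t}}{\varepsilon_c}=O\bigl((\log c)^{-3/2}\bigr)\to0,
\]
which gives $b_{r,t}=o(\varepsilon_c)$.

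The only delicate step is the reduction: one must check that the factors switch from above $1$ to below $1$ exactly at index $t_c-1$, so that no admissible range $[r,t]$ with $t<R$ can produce a larger product. After that, the estimate is a routine Taylor expansion with uniform error control, using $t_c=o(c^{2/3})$.
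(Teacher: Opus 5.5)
Your proof is correct and follows essentially the same route as the paper: discard the factors with $a\ge t_c-1$ (each at most $1$), bound the logarithm of the remaining product by a quadratic sum of order $t_c^2/(2c)$, and compare the result with $\tfrac12\log c-2\log\log c$. The only difference is technical: the paper writes each factor as $1+\frac{t_c-1-a}{s_c}$ and uses $\log(1+x)\le x$ to get the exact bound $t_c^2/(2s_c)$ with no error term, whereas your Taylor expansion with denominator $c$ needs the extra (valid) check that $t_c^3/c^2\to 0$.
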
 

\begin{proof} If $a\geq  t_c-1$, then $\frac{c-a}{s_c}\leq 1$. Also, if $a\leq t_c-2$, then 
$\frac{c-a}{s_c} = 1+\frac{ t_c-1-a}{s_c}$.  Hence, recalling \eqref{eq:brt} and using $\log(1+x)\leq x$, 
\begin{align*} 
\log b_{r,t} \leq \sum_{a=1}^{ t_c-1} \log\left(1+\frac{a}{s_c}\right) \leq \frac{1}{s_c}\sum_{a=1}^{ t_c-1}a \leq \frac{ t_c^2}{2s_c}. 
\end{align*} 
Recalling \eqref{eq:interiorpoints} and \eqref{eq:sc}, it follows that 
\begin{align}\label{eq:interiorpointssc}
\frac{t_c^2}{2s_c} \leq \frac{c\log c-4c\log\log c}{2\bigl(c-\sqrt{c\log c-4c\log\log c}+1\bigr)} =
\frac12\log c-2\log\log c+o(1)  .    
\end{align}  
Combining the above, the observation follows. 
\end{proof}

To prove Lemma \ref{lm:SRsc} we need to bound $\gamma_{R-1}$. To this end, recalling \eqref{eq:sequence} first note that 
\begin{align}\label{eq:Srsize}
\gamma_{R-1} = \sum_{r=1}^{R-1}|S_r^{(R-1)}| \leq \sum_{r=1}^{R-1}|S_r^{(r)}| . 
\end{align}
Recalling \eqref{eq:sc} and arranging the terms in \eqref{eq:S} gives, for every $1 \leq r<R$, 
$$s_c|S_r^{(r)}| \leq \gamma_{r-1} +( t_c-1-r)|S_r^{(r-1)}| +2s_c.$$ 
For $r\geq t_c-1$, the second term in  the RHS above is nonpositive. Therefore, summing over $1\leq r<R$, we obtain 
\begin{align} 
\sum_{r=1}^{R-1}|S_r^{(r)}| & \leq \frac{1}{s_c} \sum_{r=1}^{R-1}\gamma_{r-1} + \frac{1}{s_c} \sum_{r=1}^{ t_c-2} ( t_c-1-r)|S_r^{(r-1)}| +2(R-1) \nonumber \\ 
& \leq \frac{R}{s_c} \sum_{a=1}^{R-1}|S_a^{(a)}| +  \frac{1}{s_c} \sum_{r=1}^{ t_c-2} ( t_c-1-r)|S_r^{(r-1)}| +2(R-1) , 
\label{eq:srb}
\end{align} 
since, for every $r<R$ we have $$ \gamma_{r-1} = \sum_{a=1}^{r-1}|S_a^{(r-1)}| \leq \sum_{a=1}^{r-1}|S_a^{(a)}| \leq \sum_{a=1}^{R-1}|S_a^{(a)}|. $$ 
Hence, to bound the RHS of \eqref{eq:srb} we need to bound $|S_r^{(r-1)}|$. 

\begin{observation} The following holds, uniformly over $ 1 \leq r < R$,
\begin{equation}\label{eq:srsa} 
|S_r^{(r-1)}| \leq b_{r, R-1}|S_R^{(R-1)}| + O\left( \frac{R \sqrt{c} }{s_c \log^2 c} \sum_{a=1}^{R-1}|S_a^{(a)}| \right) +O\left(\frac{ R \sqrt{c}}{ \log^2 c}\right). 
\end{equation}  
\end{observation}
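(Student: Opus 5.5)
Starting from Lemma~\ref{lm:S}, we set up a one-step recursion that pushes $|S_r^{(r-1)}|$ forward to $|S_{r+1}^{(r)}|$, and then iterate it up to index $R$. For $1\le r<R$, Lemma~\ref{lm:S} gives $s_c|S_r^{(r-1)}| \le \gamma_{r-1}+(c-r)|S_r^{(r-1)}| - s_c|S_r^{(r)}|+2s_c$. This alone does not bound $|S_r^{(r-1)}|$ in terms of later quantities. So instead we use the maximality in the choice of colors. Color $r+1$ was unused when color $r$ was chosen, so $|S_{r+1}^{(r-1)}|\le |S_r^{(r-1)}|$. That inequality points the wrong way for us. The useful direction comes from the blocking count: rearranging \eqref{eq:Sra} at step $r$, the non-$r$ points inside $CH(S_r^{(r-1)})$ number at least $s_c(|S_r^{(r-1)}|+|S_r^{(r)}|-2)$. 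These are exactly the points of $S^{(r)}$ of colors $\neq r$, because the interior points of the triangular faces lie in $\mathrm{int}(CH(S_r^{(r-1)}))$. Their count is at most $\gamma_r - |S_r^{(r)}| + (c-r)|S_{r+1}^{(r)}|$, since color $r+1$ is the largest unused class in $S^{(r)}$. This yields the key recursion
\begin{equation*}
|S_r^{(r-1)}| \le \frac{c-r}{s_c}\,|S_{r+1}^{(r)}| + \frac{\gamma_r}{s_c} + 2 .
\end{equation*}

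Next we unroll this recursion from $r$ up to $R-1$. After $t-r+1$ steps the coefficient of $|S_R^{(R-1)}|$ is $\prod_{a=r}^{R-1}\frac{c-a}{s_c}=b_{r,R-1}$. The error terms collect into
\begin{equation*}
\sum_{t=r}^{R-1} b_{r,t-1}\left(\frac{\gamma_t}{s_c}+2\right).
\end{equation*}
To bound them we use Observation~\ref{observation:brt}: uniformly, $b_{r,t-1}=O(\sqrt c/\log^2 c)$, including the empty product $1$. We also use $\gamma_t\le \sum_{a=1}^{t}|S_a^{(a)}|\le \sum_{a=1}^{R-1}|S_a^{(a)}|$ for $t\le R-1$, exactly as in the display after \eqref{eq:srb}. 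The sum has at most $R$ terms. So the first error is $O\bigl(\frac{R\sqrt c}{s_c\log^2 c}\sum_{a=1}^{R-1}|S_a^{(a)}|\bigr)$ and the second is $O\bigl(\frac{R\sqrt c}{\log^2 c}\bigr)$, which is \eqref{eq:srsa}. All constants are uniform in $r$ because the bound on $b_{r,t}$ is.

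The step needing the most care is the key recursion itself, in particular checking the accounting of $S^{(r)}$. We must verify that every non-$r$ interior point of a face lies in $S^{(r)}$, and that the points of colors $1,\dots,r-1$ in $S^{(r)}$ are bounded by $\gamma_r-|S_r^{(r)}|$. The latter holds because $\gamma_r=\sum_{a\le r}|S_a^{(r)}|$. We also need the remaining colors $a>r$ to be bounded by $(c-r)|S_{r+1}^{(r)}|$ via the greedy choice at step $r+1$, which is valid since $r+1\le R$. A further point to settle is whether $t=R-1$ requires $\gamma_{R-1}$, which is still covered by $\sum_{a\le R-1}|S_a^{(a)}|$. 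If the $-|S_r^{(r)}|$ bookkeeping is off by such terms, it only changes constants inside the $O(\cdot)$, so the statement is robust to it.
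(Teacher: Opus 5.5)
Your proposal is correct and follows essentially the same route as the paper. Like the paper, you derive a one-step recursion from the blocking count at step $r$ together with the greedy maximality of color $r+1$ among the $c-r$ unused colors in $S^{(r)}$, unroll it from $r$ to $R-1$, and control the coefficients $b_{r,t-1}$ uniformly via Observation~\ref{observation:brt}. The only difference is cosmetic: you bound the points of colors $1,\dots,r-1$ in $S^{(r)}$ by $\gamma_r-|S_r^{(r)}|$, so your error term carries $\gamma_r$ where the paper's carries $\gamma_{r-1}$; this is harmless, since for $r\le R-1$ both are at most $\sum_{a=1}^{R-1}|S_a^{(a)}|$ (and in fact $\gamma_r-|S_r^{(r)}|\le\gamma_{r-1}$).
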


\begin{proof}  Fix $1\leq r < R$. Note that the number of points inside $CH(S_r^{(r-1)})$ with colors in  $[c]\backslash [r]$ is at least $$s_c(|S^{(r-1)}_{r}|+|S_r^{(r)}|-2) - \gamma_{r-1} . $$  This is because there are at least $s_c(|S^{(r-1)}_{r}|+|S_r^{(r)}|-2)$ points inside $CH(S_r^{(r-1)})$ of color different from $r$ (by arguments similar to \eqref{eq:Sra}) and points with colors in $[r-1]$ have contribution at most $\gamma_{r-1}$. Since there are $c-r$ unused colors, the most frequent unused color in $S^{(r)}$ satisfies: 
\begin{align}\label{eq:Srlayer}
|S_{r+1}^{(r)}| \geq \frac{s_c (|S^{(r-1)}_{r}|+|S_r^{(r)}|-2)-\gamma_{r-1}}{c-r} \geq \frac{s_c (|S^{(r-1)}_{r}|-\gamma_{r-1}-2 s_c }{c-r} , 
\end{align} 
where the second inequality in \eqref{eq:Srlayer} follows by dropping the term $s_c|S_r^{(r)}|$. Now, rearranging the terms in \eqref{eq:Srlayer} gives, 
$$|S_r^{(r-1)}| \leq \frac{c-r}{s_c}|S_{r+1}^{(r)}| +\frac{\gamma_{r-1}}{s_c}+2.$$ Iterating this inequality backwards from $R$ to $r$ gives \begin{equation}\label{eq:backward-limit} |S_r^{(r-1)}| \leq b_{r, R-1}|S_R^{(R-1)}| +\frac{1}{s_c}\sum_{t=r}^{R-1}b_{r,t-1}\gamma_{t-1} +2\sum_{t=r}^{R-1}b_{r,t-1}, 
\end{equation} 
where an empty product is understood to be equal to $1$.  By Observation \ref{observation:brt} and \eqref{eq:backward-limit}, the result in \eqref{eq:srsa} follows.  
\end{proof}

Note that the first term in \eqref{eq:srb} is at most $\frac{R}{s_c} \sum_{a=1}^{R-1}|S_a^{(a)}| = o( \sum_{a=1}^{R-1}|S_a^{(a)}|)$, since $R=O(\sqrt{c\log c})$ and $s_c\sim c$. Using this and \eqref{eq:srsa} in \eqref{eq:srb} gives, 
\begin{align}\label{eq:Srr} 
\sum_{r=1}^{R-1}|S_r^{(r)}| & \leq o\left( \sum_{a=1}^{R-1}|S_a^{(a)}| \right) + \left( \frac{1}{s_c} \sum_{r=1}^{ t_c-2} ( t_c-1-r)b_{r, R-1} \right) |S_R^{(R-1)}| \nonumber \\ 
& \hspace{2cm} + O\left( \frac{ R  t_c^2 \sqrt{c} }{s_c^2 \log^2 c } \sum_{a=1}^{R-1}|S_a^{(a)}| \right) + O\left( \frac{ R  t_c^2 \sqrt{c}}{ s_c \log^2 c} +R \right) \nonumber \\ 
& \leq o\left( \sum_{a=1}^{R-1}|S_a^{(a)}| \right) + \left( \frac{1}{s_c} \sum_{r=1}^{ t_c-2} ( t_c-1-r) b_{r, R-1} \right) |S_R^{(R-1)}| + O\left( \frac{ R  t_c^2 \sqrt{c}}{ s_c \log^2 c} +R \right) , 
\end{align}
since $\frac{ R  t_c^2 \sqrt{c} }{s_c^2 \log^2 c } =o(1)$ (recall \eqref{eq:interiorpoints}, \eqref{eq:sc}, and \eqref{eq:Rlayers}).  
We now bound the term $b_{r, R-1}$. For this, recalling \eqref{eq:brt}, note that 
\begin{align*} 
b_{r, R-1} \leq \prod_{t=r}^{ t_c-2} \left( 1+\frac{ t_c-1-t}{s_c} \right) \leq \exp\left( \frac{( t_c-1-r)( t_c-r)}{2s_c} \right).
\end{align*}
Hence, recalling \eqref{eq:interiorpointssc}, 
\begin{align}\label{eq:exponentialtc}
\frac{1}{s_c} \sum_{r=1}^{ t_c-2} ( t_c-1-r)b_{r, R-1} \leq \frac{1}{s_c} \sum_{a=1}^{ t_c-2} a \exp\left(\frac{a(a+1)}{2s_c}\right) \leq \frac{ t_c^2}{s_c} \exp\left(\frac{ t_c^2}{2s_c}\right) = o(\varepsilon_c). 
\end{align} 
Therefore, \eqref{eq:Srr} gives, 
\begin{align}\label{eq:diagonal-small} 
\sum_{r=1}^{R-1}|S_r^{(r)}| & \leq o(\varepsilon_c)|S_R^{(R-1)}| + o\left( \sum_{a=1}^{R-1}|S_a^{(a)}| \right)+ O\left( \frac{ R  t_c^2 \sqrt{c}}{ s_c \log^2 c} +R \right) \nonumber \\ 
& \leq (1+o(1) ) \left( o(\varepsilon_c)|S_R^{(R-1)}| + O\left( \frac{ R  t_c^2 \sqrt{c}}{ s_c \log^2 c} +R \right) \right) , 
\end{align} 
by absorbing the $o( \sum_{a=1}^{R-1}|S_a^{(a)}|)$ to the LHS. Next, we show that the additive term above is negligible.  From \eqref{eq:srsa}, taking $r=1$, we have 
$$|S_1^{(0)}| \leq b_{1, R-1}|S_R^{(R-1)}| + O\left( \frac{R \sqrt{c} }{s_c \log^2 c} \sum_{a=1}^{R-1}|S_a^{(a)}| \right) + O\left(\frac{ R \sqrt{c}}{ \log^2 c}\right).$$ 
On the other hand, \eqref{eq:diagonal-small} gives 
$$\sum_{a=1}^{R-1}|S_a^{(a)}| \leq o(\varepsilon_c)|S_R^{(R-1)}| + O\left( \frac{R  t_c^2 \sqrt{c}}{ s_c \log^2 c } +R \right).$$ 
Substituting this into the preceding inequality shows that there are constants $K > 0 $ and $K' > 0$ (depending only on $c$) such that 
 $$|S_1^{(0)}| \leq K |S_R^{(R-1)}| + K'.$$ 
On the other hand, color $1$ is chosen to be a  largest color class in $S$, which means, $|S_1^{(0)}|\geq \frac{|S|}{c}$.  The above inequalities combined show that 
$$|S_R^{(R-1)}| \geq  \frac{1}{K c} |S| - \frac{K'}{K}  .  $$ 
Hence, we may choose $N_c$ sufficiently large and then choose a bad set $S$ with $|S|>N_c$ so that $|S_R^{(R-1)}|$ is large enough such that:  
$$O\left( \frac{ R  t_c^2 \sqrt{c}}{ s_c \log^2 c} +R \right) \leq |S_R^{(R-1)}| \quad \text{ and } \quad 2s_c\leq |S_R^{(R-1)}|  ,  $$  
which proves the second inequality in \eqref{eq:SRsc}. Further, the first inequality above together with \eqref{eq:diagonal-small} gives 
$\sum_{r=1}^{R-1}|S_r^{(r)}| = o(\varepsilon_c)|S_R^{(R-1)}|$. This, together with \eqref{eq:Srsize}, proves the first condition in \eqref{eq:SRsc}. This completes the proof of Lemma \ref{lm:SRsc}. \hfill $\Box$

\begin{remark} 
\label{remark:product}
{\em The product in \eqref{eq:brt} admits a simple birthday-paradox interpretation (see, for example, \cite[Section~7.1]{DiaconisMosteller1989} and \cite{birthday}). Indeed,
\begin{align}\label{eq:b}
b_{r,t} = \left(\frac{c}{s_c}\right)^{t-r+1} \prod_{a=r}^{t}\left(1-\frac{a}{c}\right).
\end{align}
The second factor is the probability that, in the usual birthday model with
$c$ equally likely values, the next $t-r+1$ draws produce no repetition,
conditional on having already seen $r$ distinct values. Thus the products
$b_{r,t}$ may be viewed as a birthday no-collision probability multiplied by
the deterministic factor $(\frac{c}{s_c})^{t-r+1}$. In the relevant scale $t=O(\sqrt{c\log c})$, the product in \eqref{eq:b} has  Gaussian-type growth (up to lower order terms), 
$$\exp\left(\frac{t^2}{2c}\right).$$
This gives a heuristic explanation for the appearance of the $\sqrt{c\log c}$ scale in the proof, where  obtaining a polynomial gain in $c$ requires $\frac{t^2}{c}$ to be of order $\log c$.   The products $b_{r,t}$ also appear if the colors are exposed in a uniformly random order, with the same convex-hull restriction performed after each step. Conditional on the first $a$ step, the expected size of the next color class is the total surviving population of the unused colors divided by $c-a$. Combining this identity with the triangulation blocking inequality yields the factor $\frac{c-a}{s_c}$ in the backward recursion.  } 
\end{remark}

%
%

\section{ Proof of Theorem \ref{thm:colortriangle} }
\label{sec:colortrianglepf}

Let $A$ and $B$ be two finite point sets in the plane with pairwise distinct $x$-coordinates. We say that $A$ lies \emph{high above} $B$ if every point of $A$ lies above every line determined by two points of $B$, and every point of $B$ lies below every line determined by two points of $A$.
Using this notion, Horton sets are defined recursively \cite{horton1983sets} as follows (see also \cite[Chapter 3]{discretegeometry}).

\begin{definition}\label{defn:H}
{\em  A finite point set with distinct
$x$-coordinates $H=\{p_1,p_2,\ldots,p_n\}$, labeled in increasing order of their $x$-coordinates, is called a \emph{Horton set} if the following conditions hold recursively:
\begin{itemize}
    \item if $|H|\leq 2$, then $H$ is a Horton set, 
    \item if $|H|\geq 3$, then the sets
    $$H^{-} :=  \{p_1,p_3,p_5,\ldots\} \quad\text{ and } \quad H^{+} := \{p_2,p_4,p_6,\ldots\}$$ are both Horton sets, and $H^{+}$ lies high above $H^{-}$. The sets $H^{+}$ and $H^{-}$ will be referred to as the \emph{upper child} and \emph{lower child} of $H$, respectively, and $H$ will be referred to as their \emph{parent}.
\end{itemize} 
}   
\end{definition}

The proof of Theorem \ref{thm:colortriangle} proceeds by induction on $c$. The case $c=1$ is immediate: any triangulation of a set of at least three points contains a triangular face whose interior contains no point of the
set.  Thus, we may take $n_1=3$. Hence, suppose that $c\geq 2$ and that the result in Theorem \ref{thm:colortriangle} holds for $c-1$.  Define $K_c:= \lfloor\log_2(2c) \rfloor$, and choose $n_c$ sufficiently large such that 
$$n_c\ge 2c+1 \quad\text{ and } \quad \left\lfloor\frac{n_c}{2^{K_c}}\right\rfloor\ge n_{c-1}.$$ Consider a  Horton set $H=\{h_1,\ldots,h_n\}$ with $n\geq n_c$, indexed in increasing order of $x$-coordinates and colored with at most $c$ colors. Note that for any $2c+1$ consecutive points of $H$, by the pigeonhole principle, three of them have the same color.  Also, two of these three indices have the same parity.  Denote the corresponding points by $h_u$ and $h_v$, where $u < v$.  Then
\begin{align}\label{eq:uvc}
2\mid (v - u )  \quad  \text{ and } \quad v - u  \leq 2c. 
\end{align}
Write $v-u=2^s t$, where $s \geq 1$ and $t$ is odd.  Then, by \eqref{eq:uvc}, $s \leq K_c$ and $t \leq c$.
Since $2^s\mid(v-u)$, the points $h_u$ and $h_v$ belong to the same recursive Horton subset $A$ at depth $s$.  Moreover, their positions in the $x$-ordering of $A$ differ by $\frac{1}{2^s} (v-u) = t$. Also, since $t$ is odd, $h_u$ and $h_v$ belong to different children of $A$. Let $B$ be the parent of $A$, and let $A'$ be the other child of $B$.  We now consider the following two cases:

\begin{itemize}

\item There exists $z \in A'$ which has the same color as the points $h_u$ and $h_v$. Then by   Lemma~\ref{lm:horton-triangle},
$$\left| \mathrm{int}(\triangle h_u h_v z)\cap H \right| \leq \frac{t-1}{2} \leq \left\lfloor\frac{c-1}{2}\right\rfloor, $$
since $t$ is odd and $t \leq c$. Hence, $\triangle h_u h_v z$ is a monochromatic triangle with the required bound on the number of interior points.

\item Suppose, on the other hand, that $A'$ contains no point of this color.
Then $A'$ is colored with at most $c-1$ colors.  Since $A'$ occurs at depth $s \leq K_c$ in the recursive decomposition of $H$,
$$ |A'| \geq \left\lfloor\frac{|H|}{2^s}\right\rfloor  \geq \left\lfloor\frac{n_c}{2^{K_c}}\right\rfloor \geq n_{c-1}.$$
Hence, the induction hypothesis applied to the Horton set $A'$, gives a
monochromatic triangle $\triangle$ satisfying 
$$\left| \mathrm{int}(\triangle )\cap A' \right| \leq \left\lfloor\frac{c-2}{2}\right\rfloor \leq \left\lfloor\frac{c-1}{2}\right\rfloor  ,  $$
Also, since no point of $H\setminus A'$ lies in $CH(A')$, $
\mathrm{int}(\triangle)\cap H = \mathrm{int}(\triangle)\cap A'$. Hence, the same monochromatic triangle has at most the required number of interior points in $H$.
This completes the induction.  \hfill $\Box$  
\end{itemize}

To complete the proof of Theorem \ref{thm:colortriangle} we need the following lemma: 

\begin{lemma}\label{lm:horton-triangle}
Let $H$ be a Horton set, and let $B$ be a set arising at some stage of the
recursive decomposition of $H$.  Let $A, A'$ be the two children of $B$.  Suppose that $p, q \in A$ belong to different children of $A$, and that their positions in the $x$-ordering of $A$ differ
by an odd integer $t$.  Then, for every $z\in A'$,
$$\left| \mathrm{int}(\triangle p q z)\cap H\right| \leq \frac{t-1}{2}.$$
\end{lemma}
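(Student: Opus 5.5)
First, the interior points of $\triangle pqz$ can only come from $B$. The set $B$ arises in the recursive decomposition, so its convex hull meets $H$ only in $B$. This is the same fact used in the second case above: the points of $H\setminus B$ occupy residues modulo $2^{\text{depth}}$ that differ from those of $B$. Since $p,q,z\in B$, we get $\mathrm{int}(\triangle pqz)\cap H=\mathrm{int}(\triangle pqz)\cap B$. We therefore only need to count points of $A$ and of $A'$ inside the triangle.

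Next we use the ``high above'' structure. Say $A'=B^+$ lies high above $A=B^-$; the other case is symmetric after a reflection. Then $z$ lies above every line through two points of $A$, and in particular above the line $pq$. Any point of $A$ inside $\triangle pqz$ must lie above segment $pq$ and within the $x$-range of the triangle. No point of $A'$ other than $z$ can lie in the triangle. Indeed, every point $w\in A'$ lies above every line through two points of $A$, while $p,q$ lie below every line through two points of $A'$. If $w$ were interior, then the ray from $z$ through $w$ would hit segment $pq$, which would put some point of $A$ on the wrong side of line $zw$. I will check this carefully, using that $p$ and $q$ are both below line $zw$ while the segment $pq$ crosses it. So the interior points all lie in $A$, and moreover they lie strictly above segment $pq$.

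Now write $A=A^-\cup A^+$ with $A^+$ high above $A^-$. Let $p=a_i$ and $q=a_{i+t}$ in the $x$-ordering of $A$, with one in $A^-$ and one in $A^+$. Since $t$ is odd, the indices strictly between them alternate parity. There are $t-1$ of them, and exactly $(t-1)/2$ lie in the child containing $p$ and $(t-1)/2$ in the child containing $q$. A point of $A$ outside the index range $(i,i+t)$ has $x$-coordinate outside $[x(p),x(q)]$. I must still show that such a point cannot lie in the triangle, whose $x$-range may extend toward $x(z)$. For this I use the high-above property once more: the point would have to lie above segment $pq$ and on the correct side of lines $pz$ and $qz$. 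Both lines are nearly vertical relative to $A$, since $z$ lies above all lines of $A$. So I will show directly that a point $w\in A$ with $x(w)<x(p)$ lies below the line $pz$ on the relevant side, by comparing with the line through $w$ and $p$, which $z$ lies above. The comparison with line $qz$ for $x(w)>x(q)$ is analogous.

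Finally, among the $t-1$ indices between $p$ and $q$, I claim the points in the lower child $A^-$ lie below segment $pq$. Suppose $p\in A^-$ and $q\in A^+$. A point $w\in A^-$ between them lies below the line through $p$ and $q$? Since $q\in A^+$ lies above line $pw$, and $x(w)$ lies between $x(p)$ and $x(q)$, the point $w$ is below segment $pq$. So only the $(t-1)/2$ points of $A^+$ can be interior, which gives the bound. I expect the main obstacle to be the orientation bookkeeping: the case $A'=B^-$ below $A$, and ruling out points of $A$ outside the $x$-interval $[x(p),x(q)]$. I would handle the first by reflection symmetry of the definition and the second by the careful line comparisons described above.
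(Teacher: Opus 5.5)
Your proposal is correct and follows the paper's proof essentially step by step. You restrict to $B$ via the island property, exclude $A'\setminus\{z\}$ by the line-$zw$ crossing argument, exclude points of $A$ outside the $x$-range of $p,q$ by comparing with line $zp$, and finish with the parity count. The only difference is that you derive the above/below property directly from the high-above definition instead of citing it. One small correction: the island property $CH(B)\cap H=B$ holds because of the high-above separation applied inductively down the decomposition. It does not follow from points of $H\setminus B$ having different residues modulo $2^{\text{depth}}$.
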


\begin{proof}[Proof of Lemma \ref{lm:horton-triangle}]
We will use the following two standard properties of Horton sets.  

\begin{itemize}

\item Every set arising in the recursive decomposition of a Horton set is an {\it island}, that is, if $B$ is such a set, then $CH(B)\cap H=B$. This is because, the two children of $B$ are separated by the defining high-above property of Horton sets, and the assertion follows inductively.

\item If $B^+$ and $B^-$ are the upper and lower children of $B$, respectively, and $u\in B^+$ and
$v\in B^-$, then every point of $B^+$ whose $x$-coordinate lies strictly
between those of $u$ and $v$ lies above the line $uv$, while every such
point of $B^-$ lies below the line $uv$ (see, for example, \cite[Proposition 4.1]{colorempty}).  
\end{itemize}  
Now, without loss of generality assume $A$ is the lower child of $B$ and $A'$ is the upper
child.  Since $p, q\in A$, the point $z$ lies above the line $pq$.

\begin{observation} Every point of $H$ in the interior of $\triangle pqz$ belongs to $A$,
has $x$-coordinate strictly between those of $p$ and $q$, and lies above
the line $pq$.
\end{observation}

\begin{proof}
Since $\triangle pqz\subseteq CH(B)$ and $B$ is an island, no point of $H\setminus B$ lies in the triangle.  Moreover, no point $w\in A'\setminus\{z\}$ can lie in its interior.  Indeed, if it did, then
the line $zw$ would intersect the segment $pq$ in its relative interior, and $p$ and $q$ would lie on opposite sides of $zw$.  This contradicts the Horton-set property, since $z, w\in A'$ and all points of $A$ lie on the same side of every line through two points of $A'$.

It remains to consider points of $A$.  Assume, without loss of generality, that $p <_x q$. (Here, $<_x$ denotes the ordering with respect to the $x$-coordinates.) Every interior point of $\triangle pqz$ lies on the same side of the line $pq$ as $z$, and hence, lies above $pq$.  Also, every point of $H$ in the interior of $\triangle pqz$ has $x$-coordinate between those of $p$ and $q$.
Note that, if not, there exists $w\in A$ in the interior of $\triangle pqz$ such that $w<_x p$.  Since $w\in\operatorname{int}(\triangle pqz)$ and $w<_x p<_x q$, the $x$-coordinate of $w$ must lie strictly between the minimum and maximum $x$-coordinates of the three vertices. Hence, $z<_x w<_x p<_x q$. By the above/below property, $w$ lies below the line $zp$.  On the other hand, since $z$ lies above the line $pq$, the point $q$ lies above the line $zp$.  Thus, $w$ and the interior of $\triangle pqz$ lie on
opposite sides of the edge $zp$, which is a contradiction.  The case $q <_x w$ can be proved similarly.  
\end{proof}

Now, consider the recursive decomposition $A=A^+\sqcup A^-$. By assumption, $p$ and $q$ belong to different children of $A$.  Since their positions in the $x$-ordering of $A$ differ by an odd integer $t$, there are exactly $t-1$ points of $A$ strictly between them, with $\frac{t-1}{2}$ in each of $A^+$ and $A^-$.  Also, by the above/below property all intermediate points in one child lie above $pq$ and all intermediate points in the other child lie below $pq$.  Hence, at most $\frac{(t-1)}{2}$ points of $A$ can lie in the interior of $\triangle pqz$.
\end{proof}

\small

\subsection*{AI Declaration }  
ChatGPT 5.6 Sol was used to assist with and verify computations. The authors assume responsibility for all content.

\bibliographystyle{abbrvnat} 
\bibliography{ref.bib}

\end{document}